\def\ex{\mbox{ex}}
\newtheorem{thm}{Theorem}[section]
\newtheorem{lem}[thm]{Lemma}
\newtheorem{quest}[thm]{Question}
\newtheorem{claim}{Claim}
\newtheorem{cons}{Construction}
\begin{document}

\title{Codegree threshold for tiling $k$-graphs with two edges sharing exactly $\ell$ vertices\thanks{The work was supported by NNSF of China (no. 11671376) and NNSF of Anhui Province (no. 1708085MA18).}}
\author{Lei Yu, \quad Xinmin Hou\\
\small  Key Laboratory of Wu Wen-Tsun Mathematics\\
\small School of Mathematical Sciences\\
\small University of Science and Technology of China\\
\small Hefei, Anhui 230026, China.\\
}
\date{}
\maketitle

\begin{abstract}
Given integer $k$ and a $k$-graph $F$,  let $t_{k-1}(n,F)$ be the minimum integer $t$ such that every $k$-graph $H$ on $n$ vertices with codegree at least $t$ contains an $F$-factor. For integers $k\geq3$ and $0\leq\ell\leq k-1$, let $\mathcal{Y}_{k,\ell}$ be a $k$-graph with two edges that shares exactly $\ell$ vertices. Han and Zhao (JCTA, 2015) asked the following question: For all $k\ge 3$, $0\le \ell\le k-1$ and sufficiently large $n$ divisible by $2k-\ell$, determine the exact value of $t_{k-1}(n,\mathcal{Y}_{k,\ell})$. In this paper, we show that $t_{k-1}(n,\mathcal{Y}_{k,\ell})=\frac{n}{2k-\ell}$ for $k\geq3$ and $1\leq\ell\leq k-2$, combining with two previously known results of  R\"{o}dl, Ruci\'{n}ski and Szemer\'{e}di {(JCTA, 2009)} and Gao, Han and Zhao (arXiv, 2016), the question of Han and Zhao is solved completely.

\end{abstract}

\section{Introduction}
Given $k\geq2$, a $k$-uniform hypergraph ($k$-graph for short) consists of a vertex set $V$ and an edge set $E\subseteq\binom{V}{k}$, where $\binom{V}{k}$ is the set of all $k$-element subsets of $V$. Let $H$ be a $k$-graph  and let $S\subset V(H)$ with $|S|=d$ ($1\leq d\leq k-1$). The degree of $S$, denoted by $\deg_H(S)$, is  the number of edges containing $S$ (the subscript $H$ will be omitted if it is clear from the context). The minimum
$d$-degree $\delta_d(H)$ of $H$ is the minimum of $\deg_H(S)$ over all $d$-element vertex sets $S$ in $H$. We refer to $\delta_1(H)$ and $\delta_{k-1}(H)$ as the minimum degree and codegree of $H$, respectively.

Given two hypergraphs $H$ and $F$, an {\it $F$-tiling} in $H$ is a collection of vertex-disjoint copies of $F$ in $H$. An $F$-tiling is called {\it perfect} if it covers all the vertices of $H$. Perfect $F$-tilings are also referred to as {\it $F$-factors}. Given a $k$-graph $F$ and integer $n$ divisible by $|F|$, let $t_{k-1}(n,F)$ be the minimum integer $t$ such that every $k$-graph $H$ on $n$ vertices with $\delta_{k-1}(H)\geq t$ contains an $F$-factor, we also {call} $t_{k-1}(n,F)$ the codegree threshold of $F$.

Given $k\geq3$ and $0\leq\ell\leq k-1$, let $\mathcal{Y}_{k,\ell}$ be a $k$-graph with two edges that shares exactly $\ell$ vertices. In this paper, we mainly concern the codegree threshold of $\mathcal{Y}_{k,\ell}$. Some special cases of $t_{k-1}(n, \mathcal{Y}_{k,\ell})$ have been obtained in literatures. R\"{o}dl, Ruci\'{n}ski and Szemer\'{e}di~\cite{RRS-JCTA09} determined the exact value of $t_{k-1}(n, \mathcal{Y}_{k,\ell})$ when $\ell=0$, more precisely, they proved that for all $k\geq3$ and sufficiently large $n$ divisible {by $2k$},
\begin{equation}\label{EQN:e1}
t_{k-1}(n, \mathcal{Y}_{k,0})=\frac n2-k+c,
\end{equation}
where {$c\in\{2, 3\}$};
for $k=3$ and $\ell=2$, K\"{u}hn and Osthus~\cite{KO-JCTB06} showed that $t_2(n, \mathcal{Y}_{3,2})=n/4 +o(n)$, the exact value of  $t_2(n, \mathcal{Y}_{3,2})$ was given by Czygrinow, DeBiasio and Nagle~\cite{CDN-JGT14};
as a generalization, Gao, Han and Zhao~\cite{GHZ-arXiv16} determined the exact value of $t_{k-1}(n,\mathcal{Y}_{k,\ell})$ for all $k\ge 3$ and $\ell=k-1$, i.e. they proved that for any $k\ge 3$ and sufficiently large $n$ divisible by $k + 1$,
\begin{equation}\label{EQN: e2}
 t_{k-1}(n, \mathcal{Y}_{k,k-1})=\frac{n}{k+1}+c,
\end{equation}
where $c\in\{0,1\}$;
for general $k\ge 3$ and {$1\le \ell\le k-1$}, by a result on tiling $k$-partite $k$-graphs given by Mycroft~\cite{Myc-JCTA15}, we have $t_{k-1}(n, \mathcal{Y}_{k,\ell})=\frac{n}{2k-\ell}+o(n)$,   in~\cite{HZ-JCTA15}, Han and Zhao constructed an extremal graph for $\mathcal{Y}_{k,\ell}$, which yields that $t_{k-1}(n, \mathcal{Y}_{k,\ell})>\frac{n}{2k-\ell}-1$, and in the same paper, the authors asked the following question.
\begin{quest}[\cite{HZ-JCTA15}]\label{QUES: q1}
For all $k\ge 3$, $0\le \ell\le k-1$ and sufficiently large $n$ divisible by $2k-\ell$, determine the exact value of $t_{k-1}(n,\mathcal{Y}_{k,\ell})$.
\end{quest}
In this paper, we give the exact value of  $t_{k-1}(n,\mathcal{Y}_{k,\ell})$ for $k\geq3$, $1\leq\ell\leq k-2$ and sufficiently large $n$, combining with (\ref{EQN:e1}) and (\ref{EQN: e2}), Question~\ref{QUES: q1} is answered completely.

\begin{thm}\label{THM: Main}
For all $k\geq3$, $1\leq\ell\leq k-2$ and sufficient large $n$ divisible by $2k-\ell$,
$$t_{k-1}(n,\mathcal{Y}_{k,\ell})=\frac{n}{2k-\ell}.$$
\end{thm}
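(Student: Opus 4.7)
The lower bound $t_{k-1}(n,\mathcal{Y}_{k,\ell}) \ge n/(2k-\ell)$ is already given by Han and Zhao's construction in the introduction, so the task is the matching upper bound: every $k$-graph $H$ on $n$ vertices, with $(2k-\ell)\mid n$ and $\delta_{k-1}(H)\ge n/(2k-\ell)$, contains a $\mathcal{Y}_{k,\ell}$-factor. My plan follows the absorbing method, combined with the asymptotic tiling result of Mycroft quoted above.

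Fix constants $1/n \ll \eta \ll \mu \ll 1$. The argument has three steps. First, build a small absorbing set $A\subseteq V(H)$ with $|A|\le\mu n$ and $(2k-\ell)\mid|A|$ such that for every $L\subseteq V(H)\setminus A$ with $|L|\le\eta n$ and $(2k-\ell)\mid|L|$, the sub-hypergraph $H[A\cup L]$ has a $\mathcal{Y}_{k,\ell}$-factor. Second, apply Mycroft's result to $H-A$ (whose codegree is still $(n-|A|)/(2k-\ell)-o(n)$, which is more than enough for the asymptotic bound) to obtain a $\mathcal{Y}_{k,\ell}$-tiling $\mathcal{T}$ of $H-A$ covering all but at most $\eta n$ vertices. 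Third, let $L$ be the leftover; divisibility forces $(2k-\ell)\mid|L|$, and combining $\mathcal{T}$ with the $\mathcal{Y}_{k,\ell}$-factor on $A\cup L$ provided by the absorbing property of $A$ yields the desired $\mathcal{Y}_{k,\ell}$-factor of $H$.

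The crux is the first step. By the standard absorbing framework of R\"{o}dl--Ruci\'{n}ski--Szemer\'{e}di, it suffices to prove that for every $(2k-\ell)$-subset $S\subseteq V(H)$ there are $\Omega(n^{t})$ \emph{absorbers} for $S$, i.e.\ disjoint vertex sets $T$ of some fixed size $t$ such that both $H[T]$ and $H[T\cup S]$ admit $\mathcal{Y}_{k,\ell}$-factors. A natural absorber is a chain of $2k-\ell$ ``swap gadgets'', one per vertex $v\in S$: a copy $F_v$ of $\mathcal{Y}_{k,\ell}$ through $v$ together with an external vertex $u_v$ such that $F_v-v+u_v$ is again a copy, arranged so that the $u_v$'s and other auxiliary vertices collectively span a $\mathcal{Y}_{k,\ell}$-tiling. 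Each constituent gadget can then be counted greedily using the codegree bound.

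The main obstacle is that the codegree assumption $\delta_{k-1}(H)\ge n/(2k-\ell)$ is tight, so the greedy count may fail precisely when $H$ is close to the Han--Zhao extremal hypergraph. I therefore expect the proof to split into two cases. In the \emph{non-extremal} case ($H$ is $\varepsilon$-far from the Han--Zhao extremal example in edit distance), slack in the structure gives plenty of swap gadgets for every $S$, so the three-step plan carries through. In the \emph{extremal} case ($H$ is $\varepsilon$-close to the construction), one instead exploits the rigid structure: identify the extremal partition of $V(H)$ inherited from the Han--Zhao example, handle by hand the few vertices that violate the partition, and tile the remaining near-partite bulk using the balanced explicit factor suggested by the extremal hypergraph itself. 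The extremal case analysis, together with the absorber count in the non-extremal case, is where I expect the bulk of the technical work to lie.
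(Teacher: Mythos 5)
Your overall architecture --- a small absorbing set, an almost-perfect tiling of the rest, absorption of the leftover, with a separate structural argument when $H$ is near-extremal --- is the same as the paper's. However, two of your steps have genuine problems.

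First, the tiling step. You assert that $H-A$ has codegree $(n-|A|)/(2k-\ell)-o(n)$ and that this is ``more than enough'' for Mycroft's asymptotic result. It is not: $|A|=\Theta(\mu n)$, so deleting $A$ can push the codegree down to $\frac{n'}{2k-\ell}-\Theta(\mu n')$ with $n'=n-|A|$, a \emph{linear} amount below the asymptotic threshold, and Mycroft's theorem does not apply there. Moreover, at that codegree an almost-perfect tiling leaving only $\eta n\ll\mu n$ vertices can genuinely fail: in the Han--Zhao configuration with $|A_0|=\frac{n'}{2k-\ell}-\Theta(\mu n')$ every copy of $\mathcal{Y}_{k,\ell}$ meets $A_0$, so at least $\Theta(\mu n)$ vertices are left uncovered. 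What is actually needed is an almost-tiling lemma valid at codegree $(\frac{1}{2k-\ell}-\gamma)n$ \emph{under a non-extremality hypothesis} (the paper's Lemma~\ref{LEM: tiling}), together with the observation that if $H-A$ is $\xi/2$-extremal then $H$ is $\xi$-extremal. Relatedly, you place the extremal/non-extremal dichotomy at the absorber count; in fact the Gao--Han--Zhao absorbing lemma (Lemma~\ref{LEM: absorbing}) holds unconditionally at codegree $n/(2k-\ell)$, and it is the tiling step that forces the dichotomy.

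Second, the extremal case is where essentially all of the paper's work lies, and your one-sentence sketch omits its content. Concretely one must: classify vertices into $A'$, $B'$ and an exceptional set $V_0$ by their degrees into $B$ and control the symmetric differences; greedily remove copies of $\mathcal{Y}_{k,\ell}$ to swallow $V_0$ and to correct the ratio $|B'|/|A'|$ to exactly $2k-\ell-1$, which relies on the Frankl--F\"uredi bound $\ex_k(m,\mathcal{Y}_{k,\ell})=O(m^{\max\{\ell,k-\ell-1\}})$ to find copies inside dense link hypergraphs; and finally invoke a genuine tiling lemma for the balanced near-complete-partite remainder (Lemma~\ref{LEM: partition}, from Gao--Han--Zhao). ``The balanced explicit factor suggested by the extremal hypergraph itself'' is not available off the shelf, since the bulk is only close to, not equal to, the extremal structure. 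As written, your proposal reduces the theorem to its hardest component without addressing it.
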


\begin{cons}[Extremal graph, \cite{HZ-JCTA15}]\label{CONS: A}
 Let $H_0$ be a $k$-graph on $n\in (2k-\ell)\mathbb{N}$ vertices such that {$V(H_0)=A\dot\cup B$} with $|A|=\frac{n}{2k-\ell}-1$, and $E(H_0)$ consists of all $k$-subsets of $A\cup B$ intersecting $A$ and some $k$-subsets of $B$ such that $H_0[B]$ contains no copy of $\mathcal{Y}_{k,\ell}$.
\end{cons}
Clearly, $\delta_{k-1}(H_0)\geq \frac{n}{2k-\ell}-1$. Since every copy of $\mathcal{Y}_{k,\ell}$ contains at least one vertex in $A$, there is no $\mathcal{Y}_{k,\ell}$-factor in $H_0$.

The proof of Theorem~\ref{THM: Main} follows the clue given by Han and Zhao in~\cite{HZ-JCTA15}, that is we use the standard "absorbing method", which has been widely used in study of tiling problems (see for example~\cite{HZ-JCTA15,CDN-JGT14,RRS-JCTA09,XLM-18,GHZ-arXiv16}). As pointed by Han and Zhao in~\cite{HZ-JCTA15}, to determine the exact value of $t_{k-1}(n,\mathcal{Y}_{k,\ell})$,  it suffices to prove an absorbing lemma and the extremal case.
Fortunately, the absorbing lemma given in~\cite{GHZ-arXiv16} does work here and so our main contribution in this paper is to deal with the extremal case.

We give more definitions and notation which will be used in the paper. Let $H$ be a $k$-graph, write $e(H)$ or $|H|$ for the size of $E(H)$. For a set $A\subseteq V(H)$, let $H[A]$ be the subgraph induced by $A$ and denote $e_H(A)=|H[A]|$, and $\overline{e}_H(A)=\binom{|V(A)|}{k}-e_H(A)$. The subscript will be omitted if the underlying hypergraph is clear from the context. For two vertex sets $S, R\subseteq V(H)$ with $|S|<k$, let $N_H(S,R)=\{T :  T\subseteq R  \text{ such that $S\cap T=\emptyset$ and } S\cup T\in E(H)\}$ and $\deg_H(S,R)=|N_H(S, R)|$.
Define $\overline{\deg}_H(S,R)=\binom{|R\setminus S|}{k-|S|}-\deg_H(S,R)$, the number of non-edges in $S\cup R$ that contain $S$. By the definitions here,  $\deg_H(S)=\deg_H(S, V(H))$ and $\overline{\deg}_H(S)=\overline{\deg}_H(S, V(H))$. If $S=\{v\}$, write $\deg_H(v,R)$ and $\overline{\deg}_H(v,R)$ for $\deg_H(\{v\},R)$ and $\overline{\deg}_H(\{v\},R)$, respectively.  We say $H$ is {\it $\xi$-extremal} if there exists a set $B\subseteq V(H)$ of size $(1-\frac 1{2k-\ell})n$ such that $e_H(B)\leq \xi \binom{|B|}{k}$. In the paper, for constants $\alpha, \beta$, $\alpha\ll \beta$ means $\alpha$ is small enough compared to $\beta$.

The rest of the paper is arranged as follows. In Section 2, we give lemmas and the proof of Theorem~\ref{THM: Main}. The extremal case lemma will be proved in  Section $3$.

\section{Proof of Theorem~\ref{THM: Main}}
To cope with the non-extremal case, we need an absorbing lemma and an almost tiling lemma for $\mathcal{Y}_{k,\ell}$.
In~\cite{GHZ-arXiv16}, Gao, Han and Zhao gave an absorbing lemma (Lemma 3.1) for general complete $k$-partite $k$-graphs, as a special case, we have the absorbing lemma for $\mathcal{Y}_{k,\ell}$.

\begin{lem}\label{LEM: absorbing}(Absorbing Lemma)
Let $k\geq 3$, $1\leq\ell\leq k-2$, suppose $0<\alpha\ll\gamma\ll \frac{1}{2k-\ell}$ and $n$ is sufficiently large. If $H$ is an $n$-vertex $k$-graph such that $\delta_{k-1}(H)\geq \frac{n}{2k-\ell}$, then there exists a vertex set $W\subseteq V(H)$ with $|W|\leq \gamma n$ and {$|W|\in (2k-\ell)\mathbb{N}$} such that for any vertex set $U\subseteq V(H)\backslash W$ with $|U|\leq\alpha n$ and $|U|\in (2k-\ell)\mathbb{N}$, both $H[W]$ and $H[U\cup W]$ contain $\mathcal{Y}_{k,\ell}$-factors.
\end{lem}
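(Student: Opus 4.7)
The plan is to run the R\"{o}dl--Ruci\'{n}ski--Szemer\'{e}di absorbing method adapted to the target graph $\mathcal{Y}_{k,\ell}$. Set $f = 2k - \ell$ and fix a constant absorber size $m = m(k,\ell)$. Call an $m$-set $T \subseteq V(H)$ an \emph{$S$-absorber} for an $f$-set $S$ disjoint from $T$ if both $H[T]$ and $H[T \cup S]$ admit $\mathcal{Y}_{k,\ell}$-factors. The lemma then follows from two ingredients: (i) every $f$-set $S$ has at least $\beta n^{m}$ absorbers for a constant $\beta = \beta(k,\ell) > 0$; and (ii) a probabilistic selection produces a vertex-disjoint sub-family $\mathcal{F}$ of absorbers with $|\mathcal{F}| \le \gamma n/(2m)$ such that each $f$-set $S$ has at least $\alpha n / f$ absorbers in $\mathcal{F}$.

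For (i), given $S = \{s_1,\dots,s_f\}$, I would build absorbers by a switching construction. Using $\delta_{k-1}(H) \ge n/f$, iteratively select auxiliary vertex sets: first a $(k-1)$-set $P$ lying in the codegree neighborhood of prescribed $\ell$-subsets of $S$ (feasible because each codegree neighborhood has linear size and only boundedly many vertices are prescribed), then adjoin further linearly many choices so that $H[T]$ spans one $\mathcal{Y}_{k,\ell}$-copy while $H[T \cup S]$ spans two vertex-disjoint $\mathcal{Y}_{k,\ell}$-copies. Since each of the $O(1)$ extension steps draws from a set of size $\Omega(n)$, the total number of admissible $m$-tuples is $\Omega(n^{m})$, producing the claimed constant $\beta$.

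For (ii), include each $m$-set in a random family $\mathcal{F}_0$ independently with probability $p = \gamma n^{1-m}/(2m)$. First-moment and Chernoff estimates show that with positive probability $|\mathcal{F}_0| \le \gamma n/(4m)$, the number of intersecting pairs in $\mathcal{F}_0$ is $o(n)$, and every $f$-set $S$ has at least $\tfrac{1}{2}\beta \gamma n$ absorbers in $\mathcal{F}_0$. Deleting one member of each intersecting pair yields a pairwise disjoint sub-family $\mathcal{F}$ with these properties preserved up to negligible loss. Let $W_0 = \bigcup_{T \in \mathcal{F}} T$; then $H[W_0]$ inherits a $\mathcal{Y}_{k,\ell}$-factor from the individual $T$'s, and for any $U \subseteq V(H) \setminus W_0$ with $|U| \le \alpha n$ and $|U| \in f\mathbb{Z}$ we greedily partition $U$ into $f$-sets and match each to a distinct absorber in $\mathcal{F}$, which is possible because $|U|/f \ll \tfrac{1}{2}\beta \gamma n$. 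Padding $W_0$ with finitely many disjoint $\mathcal{Y}_{k,\ell}$-copies (easily located via the codegree condition) yields the desired $W$ with $|W| \in f\mathbb{Z}$ and $|W| \le \gamma n$.

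The principal obstacle is step (i): since the codegree threshold $n/f$ is tight against the extremal construction of \cite{HZ-JCTA15}, the switching step has no asymptotic slack to spare, and the absorbers must be designed so that the constant $\beta$ depends only on $k$ and $\ell$. Precisely this argument was carried out by Gao, Han and Zhao in \cite{GHZ-arXiv16} for arbitrary complete $k$-partite $k$-graphs, and it specialises to $\mathcal{Y}_{k,\ell}$ without modification, which is why the excerpt invokes that absorbing lemma directly.
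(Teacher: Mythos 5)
The paper offers no proof of this lemma at all: it is imported verbatim as a special case of Lemma 3.1 of Gao, Han and Zhao \cite{GHZ-arXiv16}, which is exactly the resolution you arrive at in your final paragraph. Your sketch is the standard absorbing-method template underlying that result (many absorbers per $(2k-\ell)$-set, then probabilistic selection), and you correctly flag that the only genuinely delicate point --- establishing $\Omega(n^m)$ absorbers for \emph{every} $(2k-\ell)$-set at the tight codegree threshold $n/(2k-\ell)$, which in \cite{GHZ-arXiv16} is done via a reachability argument rather than the naive greedy extension --- is precisely the part supplied by the cited lemma, so your account is consistent with the paper's treatment.
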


The almost tiling lemma used here also is a special case of the $\mathcal{Y}_{k,\ell}$-tiling lemma (Lemma 2.8) given by Han and Zhao in~\cite{HZ-JCTA15} and a special case of the almost tiling lemma for general $k$-partite $k$-graphs (Lemma $3.2$) given by Gao, Han and Zhao in~\cite{GHZ-arXiv16}.

\begin{lem}\label{LEM: tiling}(Almost tiling Lemma)
Let $k\geq 3$, $1\leq\ell\leq k-2$, for any $\alpha,\gamma,\xi>0$ such that $\gamma\ll \xi$, there exists an integer $n_0$ such that the followimg holds. If $H$ is a $k$-graph on $n>n_0$ vertices with $\delta_{k-1}(H)\geq (\frac{1}{2k-\ell}-\gamma)n$, then $H$ has a $\mathcal{Y}_{k,\ell}$-tiling that covers all but at most $\alpha n$ vertices unless $H$ is $\xi$-extremal.
\end{lem}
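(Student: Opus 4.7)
The plan is to invoke the standard regularity-based pipeline for codegree tiling lemmas, specialized to $\mathcal{Y}_{k,\ell}$; the result is in fact a direct consequence of Han and Zhao's $\mathcal{Y}_{k,\ell}$-tiling lemma (Lemma 2.8 of~\cite{HZ-JCTA15}) and of Gao, Han and Zhao's almost tiling lemma for $k$-partite $k$-graphs (Lemma 3.2 of~\cite{GHZ-arXiv16}), so one could simply cite those; I will nevertheless sketch the argument from scratch for clarity.

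I would first choose auxiliary constants satisfying $\gamma \ll \epsilon \ll d \ll \alpha, \xi$ and apply the weak regularity lemma for $k$-graphs to $H$, obtaining an equitable partition $V(H) = V_0 \cup V_1 \cup \cdots \cup V_t$ with $|V_0| \leq \epsilon n$ and $|V_i| = m$ for $1 \leq i \leq t$. Define the reduced $k$-graph $R$ on $\{V_1,\ldots,V_t\}$ whose edges are the $k$-tuples that are $\epsilon$-regular of density at least $d$ in $H$. A standard averaging argument, after discarding a negligible fraction of clusters, transfers the codegree condition: $\delta_{k-1}(R) \geq \bigl(\tfrac{1}{2k-\ell} - 2\gamma\bigr) t$.

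Next I would establish the dichotomy: either $R$ contains a $\mathcal{Y}_{k,\ell}$-tiling covering all but at most $\tfrac{\alpha}{2}t$ clusters, or $H$ is $\xi$-extremal. In the first case each $\mathcal{Y}_{k,\ell}$-tile of $R$ is lifted to many vertex-disjoint copies of $\mathcal{Y}_{k,\ell}$ in $H$ via a greedy embedding inside the corresponding $\epsilon$-regular $k$-tuple (the embedding is especially clean since $\mathcal{Y}_{k,\ell}$ has only two edges); combined with the uncovered clusters and $V_0$, this leaves at most $\alpha n$ vertices of $H$ untiled. To produce the tiling in $R$, view $\mathcal{Y}_{k,\ell}$ as a $k$-partite $k$-graph with $\ell$ singleton parts (for the shared vertices) and $k-\ell$ parts of size two, construct a fractional $\mathcal{Y}_{k,\ell}$-tiling of $R$ of weight close to $t/(2k-\ell)$ using the codegree bound, and round it to an integral near-perfect tiling using that $t$ is large.

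The main obstacle is the extremal step. If no such fractional tiling exists then, by LP-duality or the direct extremal analysis of Han and Zhao, there must be a set $B_R \subseteq V(R)$ of size close to $\bigl(1 - \tfrac{1}{2k-\ell}\bigr)t$ that spans essentially no regular, dense $k$-tuples. Lifting $B_R$ to $B = \bigcup_{V_i \in B_R} V_i$ and adjusting to exact size $\bigl(1 - \tfrac{1}{2k-\ell}\bigr) n$, the hierarchy $d \ll \xi$ together with the control on $V_0$ and on irregular and low-density tuples yields $e_H(B) \leq \xi \binom{|B|}{k}$, so $H$ is $\xi$-extremal. The delicate point is balancing the many small error terms (irregular tuples, sparse regular tuples, the garbage set $V_0$, and the rounding loss in the fractional tiling) so that all of them fit inside the single hierarchy $\gamma \ll \xi$.
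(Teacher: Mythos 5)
The paper does not actually prove this lemma: it is imported as a special case of Han and Zhao's $\mathcal{Y}_{k,\ell}$-tiling lemma (Lemma 2.8 of \cite{HZ-JCTA15}) and of the almost tiling lemma for complete $k$-partite $k$-graphs (Lemma 3.2 of \cite{GHZ-arXiv16}), which is exactly the justification you give in your opening paragraph, so your treatment matches the paper's. Your supplementary regularity sketch is not how those references argue (Han and Zhao work directly with a maximal tiling and a swapping argument, without regularity) and it asserts rather than proves the crucial dichotomy between a near-perfect fractional tiling and the extremal structure, but since the citation alone is what the paper relies on, this does not amount to a gap here.
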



Our contribution in the proof of Theorem~\ref{THM: Main} is to give the lemma of  extremal case for $\mathcal{Y}_{k,\ell}$. The proof will be given in the next section.

\begin{lem}[Extremal case]\label{LEM: extremal}
Given $k\geq 3$, $1\leq\ell\leq k-2$, $0<\xi \ll {\frac{1}{2k-\ell}}$ and let $n\in (2k-\ell)\mathbb{N}$ be sufficiently large. Suppose $H$ is a $k$-graph on $n$ vertices with $\delta_{k-1}(H)\geq \frac{n}{2k-\ell}$. If $H$ is $\xi$-extremal, then $H$ contains a $\mathcal{Y}_{k,\ell}$-factor.
\end{lem}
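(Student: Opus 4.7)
The plan is to exploit the structure given by $\xi$-extremality. Let $B\subset V(H)$ witness extremality, i.e.\ $|B|=(1-\tfrac{1}{2k-\ell})n$ and $e_H(B)\le \xi\binom{|B|}{k}$, and set $A=V(H)\setminus B$, so $|A|=\tfrac{n}{2k-\ell}$. Since in the Han--Zhao extremal graph every copy of $\mathcal{Y}_{k,\ell}$ must meet $A$, and $|A|$ is precisely the number of copies a factor needs, my target configuration is a $\mathcal{Y}_{k,\ell}$-factor in which every copy uses exactly one vertex of $A$ and $2k-\ell-1$ vertices of $B$; equivalently, for each $a\in A$ assigned a set $Y\subset B$ with $|Y|=2k-\ell-1$, the link $L_a[Y]$ contains a copy of $\mathcal{Y}_{k-1,\ell-1}$ on the full vertex set $Y$.

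Step 1 (adjust the partition). I would re-choose $B$ among all $(1-\tfrac{1}{2k-\ell})n$-sets to minimise $e_H(B)$. Local optimality of swaps gives $\deg_H(a,B\setminus\{b\},k-1)\ge \deg_H(b,B\setminus\{b\},k-1)$ for every $a\in A,b\in B$. Combined with the double counting
\[
\sum_{a\in A}\deg_H(a,B^{k-1})\ge \tbinom{|B|}{k-1}\tfrac{n}{2k-\ell}-k\cdot e_H(B),
\]
which follows from summing $\delta_{k-1}(H)\ge \tfrac{n}{2k-\ell}$ over $(k-1)$-subsets of $B$, I obtain that the average $\deg_H(a,B^{k-1})$ is within an $O(\xi)$ factor of $\binom{|B|}{k-1}$. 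Hence, setting $\xi\ll\xi'\ll\tfrac{1}{2k-\ell}$, only a tiny fraction of $A$ is ``atypical'' and by a direct averaging from $e_H(B)\le\xi\binom{|B|}{k}$ only a tiny fraction of $B$ has many edges inside $B$.

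Step 2 (absorb exceptional vertices). Define
\[
A_1=\{a\in A:\overline\deg_H(a,B^{k-1})>\xi'\tbinom{|B|}{k-1}\},\quad B_1=\{b\in B:\deg_H(b,B^{k-1})>\xi'\tbinom{|B|-1}{k-1}\},
\]
each of size $O(\sqrt{\xi})n$ by Step 1. I would process the vertices of $A_1\cup B_1$ one at a time, greedily placing each in a $\mathcal{Y}_{k,\ell}$ copy that uses exactly one $A$-vertex and $2k-\ell-1$ $B$-vertices drawn from $(A\setminus A_1)\cup(B\setminus B_1)$. The codegree condition makes any such local extension available since we have used only $o(n)$ vertices by the time each exceptional vertex is handled. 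The balance one-$A$-to-$(2k{-}\ell{-}1)$-$B$ is crucial: after this step the residual sets $A',B'$ satisfy $|B'|=(2k-\ell-1)|A'|$.

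Step 3 (tile the typical part). After Step 2, every $a\in A'$ has $\overline\deg_H(a,(B')^{k-1})\le 2\xi'\binom{|B'|}{k-1}$; equivalently, the link $L_a$ restricted to $B'$ is a near-complete $(k-1)$-graph. I would now produce the remaining $\mathcal{Y}_{k,\ell}$-factor by a bipartite-matching argument: build the auxiliary bipartite graph $G$ with parts $A'$ and $\binom{B'}{2k-\ell-1}$, placing an edge $aY$ whenever $L_a[Y]$ contains $\mathcal{Y}_{k-1,\ell-1}$ (which is automatic whenever $\overline{e}(L_a[Y])$ is smaller than a constant depending only on $k,\ell$). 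A counting argument shows that for each $a$, almost every $(2k-\ell-1)$-subset of $B'$ is a neighbour, and symmetrically for each $Y$ almost every $a\in A'$ is a neighbour. A system of disjoint representatives whose chosen $Y$-sets partition $B'$ exists via a Hall-type / König-type argument in this near-complete setting (alternatively, randomly partition $B'$ into $|A'|$ blocks of size $2k-\ell-1$ and apply a perfect-matching theorem to the resulting $(2k-\ell)$-partite auxiliary hypergraph).

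I expect the main obstacle to be Step 3: the greedy/matching argument must end with \emph{no} $B$-vertices left over, which requires either a careful Hall-condition verification or a small reservoir of slack vertices carried through Step 2 that can be swapped in at the end to rebalance. A secondary delicate point is the precise balance of $A$- and $B$-counts in Step 2, since any imbalance forces later copies with atypical $A$-to-$B$ ratios, and those rely on finding an edge inside $B$ where edges are scarce; the argument has to guarantee every exceptional-vertex copy uses exactly one $A$-vertex and $2k-\ell-1$ $B$-vertices, which is where the codegree hypothesis $\delta_{k-1}(H)\ge\tfrac{n}{2k-\ell}$ (not merely $\tfrac{n}{2k-\ell}-1$) will be used decisively.
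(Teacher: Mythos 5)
There is a genuine gap in Step 2, and it is precisely at the point you flag as a ``secondary delicate point'': it is in fact the heart of the extremal case. You insist that every exceptional vertex be absorbed into a copy of $\mathcal{Y}_{k,\ell}$ using exactly one $A$-vertex and $2k-\ell-1$ $B$-vertices, so as to preserve the ratio $|B|=(2k-\ell-1)|A|$. But a vertex $a\in A_1$ with $\overline{\deg}(a,B^{k-1})>\xi'\binom{|B|}{k-1}$ may have $\deg(a,B^{k-1})=0$: the codegree hypothesis bounds $\deg(S)$ from below only for $(k-1)$-\emph{sets} $S$, and all $\tfrac{n}{2k-\ell}$ guaranteed neighbours of a $(k-1)$-set in $B$ could lie in $A$, so it gives no lower bound whatsoever on the link of a single $A$-vertex in $B$. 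For such a vertex no copy of $\mathcal{Y}_{k,\ell}$ with $a$ as the unique $A$-vertex and $2k-\ell-1$ $B$-vertices exists, and your greedy step cannot proceed. Such a vertex must instead be used in a ``$B$-like'' role, which unavoidably destroys the $1:(2k-\ell-1)$ ratio and leaves the residual sets unbalanced.

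The paper's proof is organised exactly around repairing this. It reclassifies every vertex by its degree into $B$, obtaining $A'$ (high degree into $B$), $B'$ (low degree into $B$) and a small leftover $V_0$, accepts the resulting imbalance $q=|B'|-|B|$, and then corrects it with copies of \emph{atypical} composition: when $q>0$ it finds $q$ disjoint copies of $\mathcal{Y}_{k,\ell}$ entirely inside $B'$ --- possible because $\delta_{k-1}(H[B'])\ge q$ (here the exact bound $\delta_{k-1}(H)\ge \tfrac{n}{2k-\ell}$, not $\tfrac{n}{2k-\ell}-1$, is what is used decisively) forces $e(B')$ above the Frankl--F\"uredi Tur\'an number of $\mathcal{Y}_{k,\ell}$; when $q\le 0$ it uses copies containing \emph{two} $A'$-vertices. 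Only after these balancing steps does it hand the now-balanced, near-complete remainder to the partition lemma of Gao, Han and Zhao (their Lemma 6.1), which plays the role of your Step 3; your Hall-type sketch would essentially have to re-prove that lemma, and as stated (a ``system of disjoint representatives whose $Y$-sets partition $B'$'') it is a hypergraph perfect-matching problem rather than a bipartite matching, so it too needs the cited lemma or a real argument. Without the reclassification and the atypical balancing copies, the proposal does not go through.
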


Now we give the proof of Theorem~\ref{THM: Main}.
\begin{proof}[Proof of Theorem~\ref{THM: Main}:]
Let $k\geq 3$, $1\leq\ell\leq k-2$ and let $n\in (2k-\ell)\mathbb{N}$ be sufficiently large. Choose $\alpha$ and $\gamma$ small enough such that $0<\alpha\ll\gamma\ll \xi \ll \frac{1}{2k-\ell}$.  Suppose $H$ is an $n$-vertex $k$-graph satisfying $\delta_{k-1}(H)\geq \frac{n}{2k-\ell}$. If $H$ is $\xi$-extremal, then $H$ contains a $\mathcal{Y}_{k,\ell}$-factor by Lemma~\ref{LEM: extremal}. Otherwise, by Lemma~\ref{LEM: absorbing}, we find an absorbing set $W$ in $V(H)$ of size at most $\gamma n$ which has the absorbing property. Let $H':= H-W$ and $n'=|V(H')|\geq (1-\gamma)n$. If $H'$ is $\frac{\xi}{2}$-extremal, then there exists a $B'\subseteq V(H')$ of order $(1-\frac 1{2k-\ell})n'$ such that $e_{H'}(B')\leq \frac{\xi}{2}\binom{|B'|}{k}$. Thus by adding to $B'$ at most $n-n'\leq \gamma n$ vertices, we get a set $B$ of size precisely $(1-\frac{1}{2k-\ell})n$ in $V(H)$ with
$$e_{H}(B)\leq e_{H'}(B')+\gamma n\binom{n-1}{k-1}\leq \frac{\xi}{2}\binom{|B'|}{k}+k\gamma\binom{n}{k}\leq \xi\binom{|B|}{k},$$
a contradiction to the assumption that $H$ is {not} $\xi$-extremal. So we assume that $H'$ is not $\frac{\xi}{2}$-extremal. Since
$$\delta_{k-1}(H')\geq \frac{n}{2k-\ell}-\gamma n\geq (\frac{1}{2k-\ell}-\gamma)n',$$
applying Lemma~\ref{LEM: tiling} on $H'$ with $\frac{\xi}{2}$, we obtain a $\mathcal{Y}_{k,\ell}$-tiling $\mathcal{Y}$ that covers all but a set $U$ of at most $\alpha n$ vertices. {Since both $n$ and $|W|$ are divisible by $2k-\ell$, $|U|\in (2k-\ell)N$.} By the absorbing property of $W$, $H[W\cup U]$ contains a $\mathcal{Y}_{k,\ell}$-factor and together with the $\mathcal{Y}_{k,\ell}$-tiling $\mathcal{Y}$ we obtain a $\mathcal{Y}_{k,\ell}$-factor of $H$.
\end{proof}



\section{Proof of Lemma~\ref{LEM: extremal}}
We need more definitions and  notation in the proof.
Given two disjoint sets $X,Y$ and two integers $i,j\geq 0$, a set $S\subset X\cup Y$ is called of type $X^iY^j$ if $|S\cap X|=i$ and $|S\cap Y|=j$.
If $X$ and $Y$ are two disjoint vertex subsets of a $k$-graph $H$ and $i+j=k$, denote by $H(X^iY^j)$ the subgraph induced by all edges of type $X^iY^j$ in $H$ and let $e_H(X^iY^j)=|H(X^iY^j)|$ and $\overline{e}_H(X^iY^{j})={|X|\choose i}{|Y|\choose j}-e_H(X^iY^j)$ (the subscript may be omitted if it is clear from the context).
Given a set $L\subseteq X\cup Y$ with $|L\cap X|=l_1\leq i$ and $|L\cap Y|=l_2\leq k-i$, define $\deg(L,X^iY^{k-i})$ be the degree of $L$ in $H(X^iY^{k-i})$ and $\overline{\deg}(L,X^iY^{k-i})=\binom{|X|-l_1}{i-l_1}\binom{|Y|-l_2}{k-i-l_2}-\deg(L,X^iY^{k-i})$.

Given two $k$-graphs $F$ and $H$, we call $H$ {\it $F$-free} if $H$ does not contain $F$ as a subgraph. The well-known Tur\'{a}n number $\ex_k(n,F)$ is the maximum number of edges in an $F$-free $k$-graph on $n$ vertices. The following result was given by Frankl and F\"{u}redi~\cite{FF-JCTA85}.

{\begin{lem}[\cite{FF-JCTA85}]\label{THM: Turan}
For $k\geq 2$, $0\leq\ell\leq k-1$, there exists a constant $d_k$ depending only on $k$ such that $\ex_k(n,\mathcal{Y}_{k,\ell})\leq d_k n^{max\{\ell,k-\ell-1\}}$.
\end{lem}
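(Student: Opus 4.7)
The plan is to apply the $\Delta$-system (sunflower) method of Deza and Frankl, which is the canonical technique for Tur\'an problems forbidding a single exact intersection size. The starting reformulation is: a $k$-graph $H$ is $\mathcal{Y}_{k,\ell}$-free if and only if, for every $\ell$-element set $S\subseteq V(H)$, the link
$$L_H(S) := \left\{T\in\tbinom{V(H)\setminus S}{k-\ell} : S\cup T\in E(H)\right\}$$
is a $(k-\ell)$-uniform \emph{intersecting} family on $V(H)\setminus S$: any two disjoint members $T_1,T_2$ would produce edges $S\cup T_1,S\cup T_2$ sharing exactly the $\ell$ vertices of $S$. By the Erd\H{o}s--Ko--Rado theorem (valid for $n$ sufficiently large in terms of $k$), $|L_H(S)|\leq\binom{n-\ell-1}{k-\ell-1}$.

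Next, I would combine the link constraint with the Erd\H{o}s--Rado sunflower lemma: any $k$-uniform family of size exceeding $k!(r-1)^k$ contains an $r$-petal sunflower with some core of size $c\in\{0,\dots,k-1\}$. Two petals of such a sunflower share exactly $c$ vertices, so $\mathcal{Y}_{k,\ell}$-freeness forces every sunflower in $H$ to have core size $c\neq\ell$. The proof then splits into the cases $\ell\geq k-\ell-1$ and $\ell<k-\ell-1$, which produce the two branches of the exponent $\max\{\ell,k-\ell-1\}$.

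The crucial finer tool is the Hilton--Milner refinement of EKR, used to dichotomise each link $L_H(S)$: either $L_H(S)$ is a \emph{star} centred at a pivot vertex $v_S$ (so every edge of $H$ through $S$ passes through the $(\ell+1)$-set $S\cup\{v_S\}$, and the analysis moves up one level), or $|L_H(S)|=O(n^{k-\ell-2})$. In the symmetric regime $\ell<k-\ell-1$, the same dichotomy is applied via the $(k-\ell-1)$-set viewpoint, which is the natural dual. Applied iteratively, the sunflower lemma forces configurations of core size $\neq\ell$ to be either absorbed into structured ``layered'' sub-families or to produce a cross-intersection of size $\ell$ with another edge, contradicting $\mathcal{Y}_{k,\ell}$-freeness.

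The main obstacle will be the bookkeeping needed to extract the sharp exponent $\max\{\ell,k-\ell-1\}$ rather than the naive $n^{k-1}$ one gets from summing the EKR link bound over all $\ell$-sets. My plan is to handle this by induction on $n$ (with $k$ fixed), using at each step the star/spread dichotomy: star-type contributions are concentrated on $(\ell+1)$-sets, where the analysis reduces to bounding an intersecting structure one dimension smaller, while spread-type contributions are already $O(n^{\ell}\cdot n^{k-\ell-2})=O(n^{k-2})$ and can be absorbed. The two regimes $\ell\geq k-\ell-1$ and $\ell<k-\ell-1$ are then handled in parallel by the link-dualisation, ensuring the induction closes with the same constant $d_k$ depending only on $k$.
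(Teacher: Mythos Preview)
The paper does not prove this lemma; it is quoted from Frankl and F\"uredi and used only as a black box inside the proof of Lemma~\ref{LEM: extremal}. So there is no in-paper argument to compare your proposal against, and any comparison must be with the original Frankl--F\"uredi proof.

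Your opening reformulation (every $\ell$-link is intersecting) and the invocation of the $\Delta$-system method are on target; that is indeed the toolkit Frankl and F\"uredi employ. But your concrete plan has a genuine gap at the step ``the analysis moves up one level.'' When $L_H(S)$ is a star with pivot $v_S$, every edge through $S$ also contains the $(\ell+1)$-set $S'=S\cup\{v_S\}$, and you propose to iterate the dichotomy on the $(k-\ell-1)$-uniform link $L_H(S')$. That link, however, carries \emph{no} intersecting constraint: any two edges through $S'$ already meet in at least $\ell+1$ vertices, so the forbidden intersection size $\ell$ says nothing about them, and neither EKR nor Hilton--Milner applies. The recursion therefore stalls after one step. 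Moreover, even granting a clean split-off of the spread-type contribution $O(n^{k-2})$, that exponent already exceeds $\max\{\ell,k-\ell-1\}$ whenever $2\le\ell\le k-3$ (so for all $k\ge 5$ in the interior range), and hence the bookkeeping you describe cannot reach the stated bound there. The ``$(k-\ell-1)$-set dual viewpoint'' you allude to has the same defect: links of $(k-\ell-1)$-sets are not intersecting families either.

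The actual Frankl--F\"uredi argument does not iterate Hilton--Milner on links. It refines the sunflower lemma to extract, from any sufficiently large $k$-uniform family, a sub-family with a \emph{prescribed intersection semilattice} (every edge sits in a large sunflower whose full pattern of pairwise intersections is fixed in advance), and then shows that if core size $\ell$ is excluded from these patterns, the edges are concentrated on a bounded number of sets of size $\max\{\ell,k-\ell-1\}$. If you wish to supply a proof, that structural step---not an EKR/Hilton--Milner recursion---is what you need.
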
}

The following lemma also is a special version of a result (Lemma 6.1 in~\cite{GHZ-arXiv16} ) given by Gao, Han and Zhao.
\begin{lem}[\cite{GHZ-arXiv16}]\label{LEM: partition}
Given $k\geq 3$, $1\leq\ell\leq k-2$. Let $0<\rho \ll \frac{1}{2k-\ell}$ and let $n$ be sufficiently large. Suppose $H$ is a $k$-graph on $n\in (2k-\ell)\mathbb{N}$ vertices with a partition of $V(H)=X\cup Y$ such that $|Y|=(2k-\ell-1)|X|$. Furthermore, assume that

 (a) for every vertex $v\in X$, $\overline{\deg}(v,Y)\leq \rho\binom{|Y|}{k-1}$,

 (b) for every vertex $u\in Y$, $\overline{\deg}(u,XY^{k-1})\leq \rho\binom{|Y|}{k-1}$.\\
Then $H$ contains a $\mathcal{Y}_{k,\ell}$-factor.
\end{lem}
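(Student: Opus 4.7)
The plan is to build every copy of $\mathcal{Y}_{k,\ell}$ in the factor to use exactly one vertex of $X$, placed in the $\ell$-vertex overlap of its two edges, together with $2k-\ell-1$ vertices of $Y$. Concretely, each $v\in X$ will anchor a copy with edges $\{v\}\cup S_0\cup S_1$ and $\{v\}\cup S_0\cup S_2$, where $S_0\subseteq Y$ has size $\ell-1$ (legal because $\ell\ge 1$) and $S_1,S_2\subseteq Y\setminus S_0$ are disjoint sets of size $k-\ell$. Both edges are then of type $XY^{k-1}$, which is precisely the part of $H$ that (a) and (b) control. Since $|Y|=(2k-\ell-1)|X|$, finding a bijection from $X$ to a partition of $Y$ into blocks of size $2k-\ell-1$ which sends each $v$ to a \emph{$v$-good} block $T_v$ (a block admitting a decomposition $S_0\sqcup S_1\sqcup S_2$ with both resulting edges in $E(H)$) is equivalent to producing the desired $\mathcal{Y}_{k,\ell}$-factor.

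First, I would show that for every $v\in X$, a $(1-O(\rho))$-fraction of blocks $T\in\binom{Y}{2k-\ell-1}$ is $v$-good. By condition (a), each bad $(k-1)$-subset at $v$ expands into $\binom{k-1}{\ell-1}$ pairs $(S_0,S_1)$, so there are at most $\binom{k-1}{\ell-1}\rho\binom{|Y|}{k-1}$ bad $(S_0,S_1)$ pairs --- an $O(\rho)$ fraction of all such pairs. A direct count over triples $(S_0,\{S_1,S_2\})$ (each determining a unique block $T=S_0\cup S_1\cup S_2$) then gives the claimed bound on the number of $v$-good $T$. Next, I would draw a uniformly random partition $\mathcal{P}=\{T_1,\dots,T_{|X|}\}$ of $Y$ into blocks of size $2k-\ell-1$ and form the bipartite auxiliary graph $\Gamma$ on $X\sqcup\mathcal{P}$ with $v\sim T$ iff $T$ is $v$-good. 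The $X$-side of $\Gamma$ is near-complete by the preceding count. Condition (b) is what concentrates the $\mathcal{P}$-side: the defect bound at each $u\in Y$ limits how many $v\in X$ can be spoiled by $u$, so summing over the $2k-\ell-1$ vertices of a block shows that the average block has degree $\ge(1-O(\rho))|X|$ in $\Gamma$ and only $O(\rho)|X|$ blocks are substantially deficient. A McDiarmid concentration bound (swapping two vertices between blocks changes each $\deg_\Gamma(v)$ by at most $2$), combined with a two-case Hall argument --- small $S\subseteq X$ handled by the $X$-side min-degree, large $S$ handled by the scarcity of deficient blocks on the $\mathcal{P}$-side --- yields a perfect matching in $\Gamma$ with positive probability, and hence the required factor.

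The main obstacle is this two-sided control in the Hall step. Condition (a) alone would not rule out a block $T\in\mathcal{P}$ that is $v$-bad for every $v\in X$; it is precisely condition (b) that forbids this by limiting the bad behaviour of each $u\in Y$. Making the concentration statement fully rigorous in the random partition model is the delicate part, since the $\deg_\Gamma(T)$'s are not independent. A cleaner alternative, should the probabilistic Hall argument prove too finicky, is an internal absorbing step inside this lemma: reserve a small random $R\subseteq Y$ in advance, greedily tile $X$ against disjoint blocks in $Y\setminus R$ using the surplus from the first step, and dispose of the (small, balanced) leftover $U$ inside $R$, whose absorbing property follows from the same counting enabled by (a) and (b).
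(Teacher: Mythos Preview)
The paper does not prove this lemma at all: it is quoted verbatim as a special case of Lemma~6.1 of Gao, Han and Zhao~\cite{GHZ-arXiv16} and is used as a black box in the proof of Lemma~\ref{LEM: extremal}. So there is no ``paper's own proof'' to compare your attempt against; your proposal is an attempt to supply an argument the present paper deliberately outsources.

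On the merits of your sketch: the basic set-up is sound. Placing the unique $X$-vertex of each copy in the $\ell$-overlap so that both edges have type $XY^{k-1}$ is exactly the right move, and the count (via condition~(a)) that a $(1-O(\rho))$-fraction of $(2k-\ell-1)$-subsets of $Y$ are $v$-good for each $v\in X$ is correct. The genuine soft spot is the $\mathcal{P}$-side of the Hall argument. Your two-case split needs, in the large-$|S|$ case, that \emph{every} block $T$ not in $N(S)$ is ``substantially deficient''; but to conclude $|N(S)|\ge |S|$ for $|S|$ close to $|X|$ you really need $\min_{T}\deg_\Gamma(T)$ to be at least $|X|-\min_v\deg_\Gamma(v)$, not merely that few blocks are deficient. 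Condition~(b) as stated bounds $\overline{\deg}(u,XY^{k-1})$ by $\rho\binom{|Y|}{k-1}$, which is huge compared with $|X|$, so it does not by itself forbid a single $(k-1)$-set $S\subseteq Y$ (and hence a single block) from being bad for every $v\in X$. Your McDiarmid suggestion controls each $\deg_\Gamma(v)$ (a function of the partition with bounded differences), but $\deg_\Gamma(T)$ for a \emph{block} of the random partition is not a fixed function of the partition in the same sense, so the concentration you invoke on the $\mathcal{P}$-side is not yet justified. You flag this yourself, and your fallback --- reserve a random absorber $R\subseteq Y$, greedily match $X$ into $Y\setminus R$, then absorb the leftover --- is the cleaner route and is closer in spirit to how such ``almost-complete bipartite'' factor lemmas are typically proved; I would pursue that line rather than try to rescue the Hall argument.
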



\begin{proof}[Proof of Lemma~\ref{LEM: extremal}]
Since $H$ is $\xi$-extremal, there is a set $B\subseteq V(H)$ such that $|B|=(1-\frac{1}{2k-\ell})n$ and $e(B)\leq \xi \binom{|B|}{k}$. Let $A=V(H)\setminus B$. Then $|A|=\frac{n}{2k-\ell}$. Let $\epsilon_1=\xi^{\frac{1}{3}}$, $\epsilon_2=2\epsilon_1^2=2\xi^{\frac{2}{3}}$. Define
\begin{eqnarray*}
A'&:=&\left\{v\in V(H)\ :\  {\deg}(v,B)\geq(1-\epsilon_1)\binom{|B|}{k-1}\right\},\\
B'&:=&\left\{v\in V(H)\ :\ {\deg}(v,B)\leq\epsilon_1\binom{|B|}{k-1}\right\},
\end{eqnarray*}
and
$$V_0:=V(H)\setminus(A'\cup B').$$

\begin{claim}\label{2}
$\{|A\setminus A'|,|B\setminus B'|,|A'\setminus A|,|B'\setminus B|\}\leq\epsilon_2|B|$ and $|V_0|\leq 2\epsilon_2|B|$.
\end{claim}

\begin{proof}[Proof of Claim~\ref{2}] First assume that $|B\backslash B'|>\epsilon_2|B|$. By the definition of $B'$, we have
$$e(B)>\frac{1}{k}\epsilon_2|B|\cdot\epsilon_1\binom{|B|}{k-1}>2\xi\binom{|B|}{k},$$
a contradiction to $e(B)\leq \xi \binom{|B|}{k}$.

Second, assume that $|A\setminus A'|>\epsilon_2|B|$. By the definition of $A'$, for any vertex $v\notin A'$, we have  $\overline{\deg}(v,B)>\epsilon_1\binom{|B|}{k-1}$. So
$$\overline{e}(AB^{k-1})>\epsilon_2|B|\cdot\epsilon_1\binom{|B|}{k-1}=2\xi |B|\binom{|B|}{k-1}.$$
Together with $e(B)\leq \xi \binom{|B|}{k}$, we have
\begin{eqnarray*}
\sum_{S\in \binom{B}{k-1}}\overline{\deg}(S)&=&k\cdot\overline{e}(B)+\overline{e}(AB^{k-1})\\
&>&k(1-\xi)\binom{|B|}{k}+2\xi|B|\binom{|B|}{k-1}\\
&=&\left[(1-\xi)(|B|-k+1)+2\xi |B|\right]\binom{|B|}{k-1}\\
&>&|B|\binom{|B|}{k-1},
\end{eqnarray*}
where the last inequality holds because  $n$ is sufficiently large. By the pigeonhole principle, there exists a set $S\in \binom{B}{k-1}$, such that $\overline{\deg}(S)>|B|=(1-\frac{1}{2k-\ell})n$, a contradiction to $\delta_{k-1}(H)\geq \frac{n}{2k-\ell}$.

Consequently,
$$|A'\setminus A|=|A'\cap B|\leq |B\setminus B'|\leq\epsilon_2|B|,$$
$$|B'\setminus B|=|A\cap B'|\leq |A\setminus A'|\leq\epsilon_2|B|,$$
$$|V_0|\le |A\setminus A'|+|B\setminus B'|\leq\epsilon_2|B|+\epsilon_2|B|\leq2\epsilon_2|B|.$$
\end{proof}

 By $|B\setminus B'|\leq\epsilon_2|B|$ and $|B'\setminus B|\leq\epsilon_2|B|$,  for any vertex $v\in V_0$, we have
$$\deg(v,B')\geq \deg(v,B)-|B\setminus B'|\binom{|B|}{k-2}{\geq \frac{\epsilon_1}{2}\binom{|B'|}{k-1},}$$

for any vertex $v\in A'$,
$$\overline{\deg}(v,B')\leq \overline{\deg}(v,B)+|B'\setminus B|\binom{|B'|}{k-2}\leq {2\epsilon_1\binom{|B'|}{k-1},}$$

and for any vertex $v\in B'$,
$$\deg(v,B')\leq \deg(v,B)+|B'\setminus B|\binom{|B'|}{k-2}\leq {2\epsilon_1\binom{|B'|}{k-1}}.$$
Moreover, for any $(k-1)$-set $S\subseteq B'$, since $\deg(S,A')+\deg(S,B')+\deg(S,V_0)\geq \delta_{k-1}(H)$ and $\overline{\deg}(S,A')=|A'|-\deg(S,A')$, we have
$$\overline{\deg}(S,A'){\le}|A'|-\delta_{k-1}(H)+\deg(S,B')+\deg(S,V_0)\leq \deg(S,B')+3\epsilon_2|B|,$$
where the last inequality holds since $\deg(S,V_0)\leq |V_0|\leq 2\epsilon_2|B|$, $|A'|\leq \frac{n}{2k-\ell}+\epsilon_2|B|$ and $\delta_{k-1}(H)\geq \frac{n}{2k-\ell}$. Furthermore, for any $v\in B'$, we have
$$\sum_{S: v\in S\in \binom{B'}{k-1}}\deg(S,B')=(k-1)\deg(v,B')\leq2(k-1)\epsilon_1\binom{|B'|}{k-1}.$$
Putting this together gives that for any $v\in B'$,
\begin{eqnarray*}
\overline{\deg}(v,A'(B')^{k-1})&=&\sum_{S: v\in S\in \binom{B'}{k-1}}\overline{\deg}(S,A')\\
&\leq& \sum_{S: v\in S\in \binom{B'}{k-1}}\deg(S,B')+3\epsilon_2|B|\binom{|B'|-1}{k-2}\\
&\leq& 2k\epsilon_1\binom{|B'|}{k-1}.
\end{eqnarray*}

So, if $|B'|=(2k-\ell-1)|A'|$ and $|V_0|=\emptyset$ then applying Lemma~\ref{LEM: partition} we obtain a $\mathcal{Y}_{k,\ell}$-factor of $H$.

Now we assume $|B'|\not=(2k-\ell-1)|A'|$ or $|V_0|\not=\emptyset$.
Let $q:=|B'|-|B|=\frac{n}{2k-\ell}-|A'|-|V_0|.$ Then $-\epsilon_2|B|\leq q\leq \epsilon_2|B|.$


{The first step, we find $q$ vertex-disjoint copies of $\mathcal{Y}_{k,\ell}$ in $H[B']$ when $q>0$. We claim that we can greedily construct $q$ vertex-disjoint copies of $\mathcal{Y}_{k,\ell}$ in $H[B']$. In fact, suppose that we have found $i$ copies of $\mathcal{Y}_{k,\ell}$ for some $0\le i<q$ and let $U$ be the set of the vertices of $B'$ covered by these $i$ copies of $\mathcal{Y}_{k,\ell}$. Then $|U|\leq (2k-\ell)(q-1)$. Since $\deg(v,B')\leq  2\epsilon_1\binom{|B'|}{k-1}$ for any vertex $v\in B'$ and $\delta_{k-1}(H[B'])\geq q$,
$$e(B'\setminus U)\geq \frac{q}{k}\binom{|B'|}{k-1}-(2k-\ell)(q-1)2\epsilon_1\binom{|B'|}{k-1}>\ex_k(|B'|-|U|,\mathcal{Y}_{k,\ell}),$$
where the last inequality holds because $n$ is sufficiently large and $\epsilon_1$ is small enough. By Lemma~\ref{THM: Turan}, we can find a copy of $\mathcal{Y}_{k,\ell}$ avoiding $U$. The claim holds.  Set $Y_1$ be the $q$ vertex-disjoint copies of $\mathcal{Y}_{k,\ell}$ in $H[B']$. If $q\leq0$, set $Y_1:=\emptyset$.}

The next step,  we choose a $\mathcal{Y}_{k,\ell}$-tiling $Y_2$ such that each copy of $\mathcal{Y}_{k,\ell}$ contains one vertex in $V_0$ and $2k-\ell-1$ vertices in $B'$. Let $V_0=\{w_1,\ldots,w_{|V_0|}\}$. We claim that, for each $w_i$, we can find a copy of $\mathcal{Y}_{k-1,\ell-1}$ in the $(k-1)$-graph $N(w_i,B')$ such that these $|V_0|$ copies of $\mathcal{Y}_{k-1,\ell-1}$ are vertex disjoint and are also vertex disjoint from $V(Y_1)$. This is possible because the total number of vertices in $B'$ that we need to avoid is at most
$$|V(Y_1)|+(2k-\ell-1)|V_0|\leq \epsilon_2|B|(2k-\ell)+(2k-\ell-1)2\epsilon_2|B|\leq 3(2k-\ell)\epsilon_2|B|,$$
and so we have
{$$|N(w_i,B')|-3(2k-\ell)\epsilon_2|B|\binom{|B'|}{k-2}\geq \frac{\epsilon_1}{3}\binom{|B'|}{k-1}.$$}
By Lemma~\ref{THM: Turan}, $N(w_i,B')$ contains a desired $\mathcal{Y}_{k-1,\ell-1}$.  Note that the copy of $\mathcal{Y}_{k-1,\ell-1}$ union $\{w_i\}$ spans a copy of $\mathcal{Y}_{k,\ell}$ in $H$. Therefore, the $|V_0|$ copies of $\mathcal{Y}_{k,\ell}$ form the desired $Y_2$.

Now reset $B_1$ to the set of vertices in  $B'$ not covered by $Y_1\cup Y_2$, $A_1=A'$ and $V_1=A_1\cup B_1$. The third step we choose a $\mathcal{Y}_{k,\ell}$-tiling $Y_3$ to adjust the sizes of $A_1$ and $B_1$ such that $|B_1\setminus V(Y_3)|=(2k-\ell-1)|A_1\setminus V(Y_3)|$. Let $p=\frac{1}{2k-\ell}|V_1|-|A_1|$. Note that $|Y_1|=q$ if $q>0$ and 0 otherwise, $|Y_2|=|V_0|$ and $|V_1|=n-(2k-\ell)(|Y_1|+|V_0|)$. We have
$$p=\frac{n}{2k-\ell}-|Y_1|-|V_0|-|A_1|=q-|Y_1|.$$
If $q>0$ then $p=0$. Thus $|B_1|=|V_1|-|A_1|=(2k-\ell-1)|A_1|$. Therefore, we choose $Y_3=\emptyset$ in this case. Now assume $q\leq0$. Then $Y_1=\emptyset$ and so $p=q\geq -\epsilon_2|B|$. We claim that we can pick $-p$ vertex disjoint copies of $\mathcal{Y}_{k,\ell}$ such that each of them contains two vertices in $A_1$ and $2k-\ell-2$ vertices in $B_1$.
In fact, for any pair $\{u_i,v_i\}\subseteq  A_1\ (i\leq -p)$,
we show that we can find a copy of $\mathcal{Y}_{k-1,\ell}$ in the $(k-1)$-graph $N(u_i,B_1)\cap N(v_i,B_1)$ such that these $-p$ copies of $\mathcal{Y}_{k-1,\ell}$ are vertex disjoint.
since
$$|B_1|\geq |B'|-|V(Y_2)|\geq |B'|-2\epsilon_2|B|(2k-\ell)>(1-\epsilon_1)|B'|.$$
we have, for any $v\in A_1$,
$$\overline{\deg}(v,B_1)\leq \overline{\deg}(v,B')\leq 2\epsilon_1\binom{|B'|}{k-1}<3\epsilon_1\binom{|B_1|}{k-1}.$$
Thus
{$$|N(u_i,B_1)\cap N(v_i,B_1)|\geq (1-6\epsilon_1)\binom{|B_1|}{k-1}.$$}
Since the total number of vertices in $B_1$ that we need to avoid is at most $(2k-\ell-2)(-p)\leq 2k\epsilon_2|B|$,  we have
{$$|N(u_i,B_1)\cap N(v_i,B_1)|- 2k\epsilon_2|B|\binom{|B_1|}{k-2}\geq{\frac{1}{2}\binom{|B_1|}{k-1}}.$$}
By Lemma~\ref{THM: Turan}, we can greedily find $-p$ desired copies of $\mathcal{Y}_{k-1,\ell}$. Note that each copy of $\mathcal{Y}_{k-1,\ell}$ union its corresponding pair $\{u_i,v_i\}$ spans a copy of $\mathcal{Y}_{k,\ell}$. These $-p$ copies of $\mathcal{Y}_{k,\ell}$ forms the claimed  $\mathcal{Y}_{k,\ell}$-tiling, say $Y_3$.
Let $A_2=A_1\setminus V(Y_3)$ and $B_2=B_1\setminus V(Y_3)$. Then
\begin{eqnarray*}
|B_2|&=& |B_1|+(2k-\ell-2)p\\
    &=&  |V_1|-|A_1|+(2k-\ell)p-2p\\
    &=& (2k-\ell)(p+|A_1|)+(2k-\ell)p-|A_2|\\
    &=& (2k-\ell)(|A_1|+2p)-|A_2|\\
    &=&(2k-\ell-1)|A_2|.
\end{eqnarray*}

The last step, we show that $H[A_2\cup B_2]$ contains a $\mathcal{Y}_{k,\ell}$-factor $Y_4$.  Since $|Y_3|\le -p\leq \epsilon_2|B|$, we have
$$|B_2|\geq |B'|-|V(Y_1)\cup V(Y_2)\cup V(Y_3)|\geq |B'|-4\epsilon_2|B|(2k-\ell)>{(1-\epsilon_1)|B'|}.$$
Hence, for every $v\in A_2$,
$$\overline{\deg}(v,B_2)\leq \overline{\deg}(v,B')\leq 2\epsilon_1\binom{|B'|}{k-1}\leq2\epsilon_1\binom{\frac{1}{1-\epsilon_1}|B_2|}{k-1}<3k\epsilon_1\binom{|B_2|}{k-1},$$
and for every $v\in B_2$,
$$\overline{\deg}(v,A_2B_2^{k-1})\leq \overline{\deg}(v,A'(B')^{k-1})\leq 2k\epsilon_1\binom{|B'|}{k-1}\leq 3k\epsilon_1\binom{|B_2|}{k-1}.$$
Apply Lemma~\ref{LEM: partition} to $H[A_2\cup B_2]$ with $X=A_2$, $Y=B_2$ and $\rho=3k\epsilon_1$, we get a $\mathcal{Y}_{k,\ell}$-factor $Y_4$ of $H[A_2\cup B_2]$.

Finally, the union  $Y_1\cup Y_2\cup Y_3\cup Y_4$ forms a $\mathcal{Y}_{k,\ell}$-factor of $H$. This concludes the proof of Lemma~\ref{LEM: extremal}.

\end{proof}

{}












\begin{thebibliography}{99}

\bibitem{CDN-JGT14}
A. Czygrinow, L. DeBiasio, and B. Nagle, Tiling $3$-uniform hypergraphs with $K_4^3-2e$. J. of Graph Theory,
75(2) (2014) 124-136.

\bibitem{FF-JCTA85}
P. Frankl, Z. F\"{u}redi, Forbidding just one intersection. J. Combin. Theory Ser. A, 39(2) (1985) 160-176.

\bibitem{GHZ-arXiv16}
W. Gao, J. Han, and Y. Zhao, Codegree conditions for tiling complete $k$-partite $k$-graphs and loose cycles. preprint, arXiv:1612.07247, (2016).

\bibitem{HZ-JCTA15}
J. Han, Y. Zhao, Minimum codegree threshold for Hamilton $\ell$-cycles in $k$-uniform hypergraphs. J. Combin. Theory Ser. A, 132 (2015) 194-223.

\bibitem{XLM-18}
X. Hou, B. Liu, Y. Ma, Codegree conditions for tilling balanced complete 3-partite 3-graphs and generalized 4-cycles, submitted.

\bibitem{KO-JCTB06}
D. K\"uhn, D. Osthus, Loose Hamilton cycles in 3-uniform hypergraphs of high minimum degree, J. Combin. Theory Ser. B, 96(6) (2006) 767-821.

\bibitem{Myc-JCTA15}
R. Mycroft, Packing k-partite k-uniform hypergraphs, J. Combin. Theory Ser. A, 138 (2016) 60-132.

\bibitem{RRS-JCTA09}
V. R\"{o}dl, A. Ruci\'{n}ski, and E. Szemer\'{e}di, Perfect matchings in large uniform hypergraphs with large minimum collective degree, J. Combin. Theory Ser. A, 116(3) (2009) 613-636.

\end{thebibliography}
\end{document}